\numberwithin{equation}{section}
\newcommand{\ba}{\begin{eqnarray}}
\newcommand{\ea}{\end{eqnarray}}
\newtheorem{theorem}{Theorem}[section]
\newtheorem{assumption}[theorem]{Assumption}
\newtheorem{definition}[theorem]{Definition}
\newtheorem{lemma}[theorem]{Lemma}
\newtheorem{remark}{Remark}[section]
\def\EE{\mathbb{E}}
\def\d{{\mathrm{d}}}
\def\RR{\mathbb{R}}
\def\bmA{{\mathbf{A}}}
\title[ The stochastic MHD equations driven by pure jump noise in $L^p$ spaces]{{ The stochastic MHD equations driven by pure jump noise in $L^p$ spaces$^\dagger$}
}
  \thanks{$\dagger$  This work is partially supported by National Natural Science Foundation of China (No. 12071433).}
   \thanks{$*$ Corresponding author}
\begin{document}

\maketitle
\centerline{\scshape Kaicheng Ni$^{a}$,  Heling Su$^{a}$, Jiahui Zhu$^{a,*}$ }
\medskip
 {\footnotesize
\centerline{ $a.$   School of Mathematical Sciences, Zhejiang University of Technology, Hangzhou 310019, China }

\begin{abstract}
We consider the stochastic incompressible magnetohydrodynamic equations driven by additive jump noises  on either the whole space  $\mathbb{R}^d$, $d=2,3$ or  a smooth  bounded domain $D$ in $\mathbb{R}^d$. We establish the local existence and uniqueness of a mild solution in the space $L^q(0,T;\mathbb{L}^{p\otimes}_{\sigma}(D))$ allowing for initial data with less regularity, including the marginal case $u_0\in \mathbb{L}^{d\otimes}_{\sigma}(D)$. In the two-dimensional case, we also prove the global existence of mild solutions.

\hspace{1cm}

\textit{Keywords}:  Stochastic MHD system; mild solution;  compensated Poisson random measure; well-posedness

\textit{Mathematics Subject Classification}: 60H15, 35R60,   60G51
\end{abstract}

\section{Introduction}
  In this paper, we consider the following stochastic motion for incompressible Magneto-hydrodynamics (MHD) flows in $D\subset \RR^d$, $d=2,3$:
	\begin{align}
	\begin{split}\label{MHD-eq} 
		&\frac{\partial v}{\partial t}+(v\cdot\nabla)v-\Delta v - (H\cdot\nabla)H +\nabla\Bigl(p+\frac{|H|^2}{2}\Bigr)
		=\int_Z \xi_1(t,z)\tilde{N}_1(\mathrm{d}t,\mathrm{d}z) ,\\
		&\frac{\partial H}{\partial t}+(v\cdot\nabla)H+\text{curl}(\text{curl} H) - (H\cdot\nabla)v
		=\int_Z \xi_2(t,z)\tilde{N}_2(\mathrm{d}t,\mathrm{d}z),\\
		&\operatorname{div} v=0, \quad \operatorname{div}H=0,\\
		&v(x,0)=v_0(x),\quad H(x,0)=H_0(x),
		\end{split}
	\end{align}
where $D$ is either all of $\RR^d$ or  a  bounded domain in $\RR^d$ with compact and sufficiently smooth boundary. In this problem, $H, v$ and $p$ denote the magnetic field, velocity and pressure of the fluid respectively,  functions  $\xi_i:[0,T]\times \Omega\times Z_i\rightarrow \mathbb{L}_{\sigma}^{r}(D)$, $r\geq d$ are two measurable maps such that $\EE \int_0^T\int_{Z_i}\|\xi_i(s,z)\|_{ \mathbb{L}_{\sigma}^{r}(D)}^2\nu_i(\d z)\d s<\infty$, and $\tilde{N}_i(d t, d z)=N_i(\mathrm{d} t, \mathrm{d} z)-\nu_i(\mathrm{d} z) \mathrm{d} t$, $i=1,2,$ are two  compensated Poisson random measures with non-negative $\sigma$-finite intensity measures $\nu_i$ on measurable spaces $(Z_i,\mathcal{Z}_i)$, $i=1,2$. 
When $D$ is a bounded domain, equation \eqref{MHD-eq} is equipped with the boundary conditions $v=0 $, $H \cdot n=0$ and $\operatorname{curl} H \times \bm{n}=0$ on $\partial D$, where $\bm{n}$ is the unit outer normal on the boundary.

In this paper, we will prove the local existence and uniqueness of an $L^q([0, T) ; \mathbb{L}^{p\otimes}_{\sigma}(D))$-valued solution to Equation \eqref{MHD-eq}  for less regular initial data $L^{r\otimes}_{\sigma}(D)$, $r\geq d$, where $p$ and $q$ are chosen such that the $L^q([0, T) ; \mathbb{L}^{p\otimes}_{\sigma}(D))$-norm of the solution is scaling dimensionless, 
  i.e. $\frac{1}{q}=(\frac{1}{r}-\frac{1}{p})\frac{d}{2}$, $r<p<\infty $, see Theorem \ref{main}. 
 In the case of 2D domains, we also establish the global existence and uniqueness of an $L^q([0, T) ; \mathbb{L}^{p\otimes}_{\sigma}(D))$-valued mild solution, $p>2$ for  $\bm{u}_0 \in \mathbb{L}^{2\otimes}_{\sigma}(D)$, see Theorem \ref{Th-2D}.  Our results generalize those found in \cite{Sun-10, Zeng} for stochastic MHD equations,  and \cite{Fang, Zhu+Brz+Liu-19-SPL} for stochastic Navier-Stokes equaitons.  In \cite{Zeng}, Zeng prove that there is a local solution $\bm{u}$ in $C([0, T) ; \mathbb{L}^{p\otimes}_{\sigma}(D)$ for $\bm{u}_0 \in \mathbb{L}^{p\otimes}_{\sigma}(D):=\mathbb{L}^{p}_{\sigma}(D)\times \mathbb{L}^{p}_{\sigma}(D))$ if $p>d$.  
However, the proofs of \cite{Zeng}  can not directly apply to the marginal case $\bm{u}_0 \in \mathbb{L}^{d\otimes}_{\sigma}(D)$ and specialized techniques are required in $L^q([0, T) ; \mathbb{L}^{p\otimes}_{\sigma}(D))$.   To illustrate our results, let us recall the scaling property of MHD system: if $\bm{u}(x, t)=(v(x,t),H(x,t))$ solves \eqref{MHD-eq}, then for each $\lambda>0$,
$\bm{u}_\lambda(x, t)=\lambda \bm{u}(\lambda x, \lambda^2 t) $ 
also solves  \eqref{MHD-eq}.  
Observe that the $\mathbb{L}^{d\otimes}_{\sigma}(D)$-norm of the initial data has zero scaling dimension and the $L^q([0, T) ; \mathbb{L}^{p\otimes}_{\sigma}(D))$-norm of the solution also has zero scaling dimension.  In the marginal case, this scaling dimensionless property is critical as 
it guarantees the uniqueness of the solutions within the specified $L^q([0, T) ; \mathbb{L}^{p\otimes}_{\sigma}(D))$ function space. 
It is worth mentioning that the class $C([0, T;\mathbb{L}^{d\otimes}_{\sigma}(D))$ is usually insufficient to ensure the uniqueness. One may conduct \cite{Ni+Sacks} for an example of non-uniqueness in initial boundary value problems of nonlinear heat equations.

Compared to \cite{Fang, Sun-10, Zeng}, which considered fractional Brownian motion, our work deals with the jump noise, and this requires specialized techniques and tools. 
To handle the stochastic term of jump type, the ideas of our proofs are inspired by a recent work  \cite{BLZ} by Brze\'zniak, Liu and the third named author. 
Since the semigroup $\bm{S}(t)$ of our equation is only a bounded analytic $C_0$-semigroup in $\mathbb{L}^{p\otimes}_{\sigma}(D)$, we cannot apply the maximal inequality for $C_0$-contraction semigroups in \cite{Zhu+Brz+Liu-2019}. Instead we employ techniques from \cite{BLZ} and \cite{Zhu+Brz+Liu-19-SPL}.  The main feature of our approach here is that we can prove the trajectories of the corresponding Ornstein-Uhlenbeck process belong to the space $\mathbb{L}^q(0, T ; \mathbb{L}^{p\otimes}_{\sigma}(D))$ using the Burkholder inequality in Banach spaces.  Another novelty of the present paper is that our results are applicable to general, possibly unbounded domains. 
  This is important when examining the asymptotic behavior of solution.

The structure of the paper is as follows. In the next section, we introduce the functional setting of the problem and present the abstract formulation of the stochastic model. In section \ref{sec-lcoal-ex}, we prove the local existence and uniqueness of the mild solution in $L^q(0,T;\mathbb{L}_{\sigma}^{p\otimes}(D))$ and also establish global existence of mild solutions in two dimension.

\section{Preliminaries}\label{section-oper-non}

 For function spaces of $d$-dimensional vector valued functions, we use the blackboard bold letters, for instance,
$
\mathbb{L}^p(D)=\{u=\left(u_1,\ldots, u_d\right) \mid u_j \in L^p(D), j=1, \ldots, d\} .
$
Likewise for $\mathbb{C}_c^{\infty}(D)$ and $\mathbb{W}^{m, p}(D)$. Let 
$\mathbb{L}_{\sigma}^p(D)$ be the completion in $\mathbb{L}^p(D)$  of the set $\{u \in \mathbb{C}_c^{\infty}(D) \mid \nabla \cdot u=0 \text { in } D\}$.

We define the Stoke operator $A=-P \Delta$ with domain $\mathscr{D}(A)=\mathbb{L}_\sigma^p(D) \cap \mathbb{W}^{2, p}(D) \cap \mathbb{W}_0^{1, p}(D)$, where $P$ is the  Helmholtz projection  from  $\mathbb{L}^p(D)$ onto $ \mathbb{L}_{\sigma}^p(D)$. 
According to \cite{Giga-83},  $A$ is a closed densely defined linear operator and has a bounded inverse in $\mathbb{L}_{\sigma}^p(D)$.  The operator $-A$ generates a bounded analytic $C_0$-semigroup $(e^{-tA})_{t\geq 0}$ in $\mathbb{L}^{p }_{\sigma}(D)$.   
When $D=\mathbb{R}^d$, $A=-P \Delta=-\Delta P$, then $A$ is essentially equal to $-\Delta$ and $(e^{-tA})_{t\geq 0}$ is essentially the heat operator.

Define $ \mathcal{M}\, H =\mathrm{curl} \text { curl } H$ with domain $ \mathscr{D}(\mathcal{M}) =\mathbb{L}_\sigma^p(D) \cap\{H \in \mathbb{W}^{2, p}(D) \mid \bm{n}\times \operatorname{curl} H=0 \text { on } \partial D\}$. 
It is also shown in \cite{Miya-80, Miya-82} that $\mathcal{M}\,H \in \mathbb{L}_\sigma^p(D)$ for $H \in \mathscr{D}(\mathcal{M})$, the domain of $\mathscr{D}(\mathcal{M})$ is dense in $\mathbb{L}_\sigma^p(D)$, and $\mathcal{M}$ is a closed operator for $1<p<\infty$.  
Moreover, $\mathcal{M}$ is a sectorial operator.

We denote $\mathbb{L}^{p \otimes}(D)=\mathbb{L}^p (D)\times \mathbb{L}^p(D)$ and $\mathbb{L}_\sigma^{p \otimes}(D)=\mathbb{L}_\sigma^p(D) \times \mathbb{L}_\sigma^p(D)$. Define  the operator $\bmA: \mathscr{D}\left(\bmA\right) \subset \mathbb{L}_{\sigma}^{p\otimes}(D) \rightarrow \mathbb{L}^{p\otimes}_{\sigma}(D)$ by the formula $\bmA \bm{u}:
=(Av,\mathcal{M}\,H),$ $\bm{u}=(v,H)\in\mathscr{D}\left(\bmA\right)=\mathscr{D}(A)\times \mathscr{D}(\mathcal{M})$. 
Since both $A$ and $\mathcal{M}$ are sectorial operators, according to 
 \cite[Theorem 2.16]{Yagi-09}, the diagonal matrix operator $\mathbf{A}$, considered with the domain $\mathscr{D}(A) \times \mathscr{D}(\mathcal{M})$ is still a sectorial positive operator on the space $\mathbb{L}^{p\otimes}_{\sigma}(D)$. Let $(\bm{S}(t))_{t\geq 0}$  be the bounded analytic semigroup generated by $\bmA$  on $\mathbb{L}^{p\otimes}(D)$. Then for $\alpha>0$,  the fractional operator $\bmA^\alpha$ commutes with $\bm{S}(t)$ on $\mathscr{D}(\bmA^\alpha)$. By the properties of analytic semigroup, see \cite{Lunardi} (see also \cite[Lemma 2.2]{Giga-83}),  for any $\bm{u}=(v,H)\in \mathbb{L}_{\sigma}^{p\otimes}(D)$,
\begin{align}\label{ineq-semi-pro-1-MHD}
	\left\|\bmA^\alpha \bm{S}(t) \bm{u}\right\|_{\mathbb{L}_{\sigma}^{p\otimes}(D)} \leq C_\alpha\, t^{-\alpha}\|\bm{u}\|_{\mathbb{L}_{\sigma}^{p\otimes}(D)}.
\end{align}
Note that $\mathscr{D}(A^{\gamma})$ is continuously embedded in $\mathbb{H}^{2\gamma,r}(D)$ (c.f. \cite[Proposition 1.4]{Giga+Miya} ) and $\mathscr{D}(\mathcal{M}^{\gamma})$ is continuously embedded in $\mathbb{H}^{2\gamma,r}(D)$ (c.f. \cite[Theorem 16.2]{Yagi-09}).  By using the Sobolev embedding theorem 
and the regularity \eqref{ineq-semi-pro-1-MHD}, we deduce that for any $t>0$, $\bm{S}(t):\mathbb{L}^{r\otimes}_{\sigma}(D)\rightarrow \mathbb{L}^{p\otimes}_{\sigma}(D)$ is bounded and for any $\bm{u}\in \mathbb{L}^{r\otimes}_{\sigma}(D)$, 
	\begin{align}\label{MHD-semi-Lp-est}
	\left\| \bm{S}(t) \bm{u} \right\|_{\mathbb{L}_{\sigma}^{p\otimes}(D)} \leq C_{p,r}\, t^{-\frac{d}{2}\,\big(\frac1r-\frac1p\big)}\|\bm{u}\|_{\mathbb{L}_{\sigma}^{r\otimes}(D)} , \quad 1<r\leq p<\infty.
	\end{align}
	Throughout the paper, the symbol $C$ will denote a positive generic constant whose value may change from place to place. If a constant depends on some variable parameters, we will put them in subscripts.

Let us recall the Marcinkiewicz spaces (or weak Lebesgue spaces) $L^{(q)}(0,T)$ for $1\leq q\leq \infty$. 
The Marcinkiewicz space  $L^{(q)}(0, T)$, $1\leq q<\infty$, consists of all measurable functions  $f:(0,T)\rightarrow \mathbb{R}$ such that
$$
[f]_{L^{(q)}(0, T)}=\sup _{\lambda >0} \lambda \mu_f(\lambda)^{1 / q}<\infty.
$$
with $
\mu_f(\lambda)=\mu(\{t \in (0,T):|f(t)|>\lambda\}) 
$ being its distribution function  on $[0, \infty)$, where $\mu$ is the Lebesgue measure on $\mathbb{R}$. 
In the limiting case $q=\infty$,   $L^{(\infty)}(0, T)$, consists of all measurable function $f$ such that
$$
[f]_{L^{(\infty)}(0, T)}=\inf\{\lambda: \mu_f(\lambda)=0\}<\infty.
$$
Note that Marcinkiewicz spaces are  special cases of the more general  Lorentz spaces.  By using the  Marcinkiewicz spaces and generalized Marcinkiewicz interpolation theorem, we have the following Lemma whose proof being in the spirit of \cite{Giga-86}.  
This lemma allows us to consider the marginal case $r=d$.
\begin{lemma}\label{lemma-est-u_0} For $\bm{u}\in \mathbb{L}_{\mathrm{\sigma}}^{r\otimes}(D)$,   we have
\begin{align}
		\int_0^t\left\|\bm{S}(s) \bm{u}\right\|_{\mathbb{L}_{\sigma}^{p\otimes}(D)}^q \mathrm{d}  s \leq C\|\bm{u}\|_{\mathbb{L}_{\sigma}^{r\otimes}(D)}^q, \quad 0 \leqslant t \leqslant T ,
		\end{align}
with $\frac{1}{q}=(\frac{1}{r}-\frac{1}{p}) \frac{d}{2}, \;q> r>1$ and $C=C(p, q, r)$.
\end{lemma}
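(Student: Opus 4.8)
The plan is to recognise that the exponent in \eqref{MHD-semi-Lp-est} is \emph{exactly critical} under the scaling relation $\frac1q=\frac d2(\frac1r-\frac1p)$: raising \eqref{MHD-semi-Lp-est} to the power $q$ and integrating naively produces $\int_0^t s^{-\frac d2(\frac1r-\frac1p)q}\,\d s=\int_0^t s^{-1}\,\d s$, which diverges. A direct estimate is therefore hopeless, and this borderline behaviour is precisely why Marcinkiewicz (weak Lebesgue) spaces and interpolation must enter. I would prove the strong $L^q$ bound by interpolating between two weak-type endpoint estimates.

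First I would introduce the sublinear operator $T$ defined by $(T\bm u)(s)=\|\bm S(s)\bm u\|_{\mathbb{L}_\sigma^{p\otimes}(D)}$ for $s\in(0,T)$, keeping the target Lebesgue exponent $p$ fixed; it is sublinear because $\bm S(s)$ is linear and the norm is subadditive, and it is well defined on $\mathbb{L}_\sigma^{\rho\otimes}(D)$ for every $1<\rho\le p$ by \eqref{MHD-semi-Lp-est}. The heart of the argument is a weak-type bound at the critical exponent for each admissible source space. Fix $\rho$ with $1<\rho<p$, put $\alpha=\frac d2(\frac1\rho-\frac1p)$ and $q_\rho=1/\alpha$. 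From \eqref{MHD-semi-Lp-est} the super-level set $\{s\in(0,T):(T\bm u)(s)>\lambda\}$ is contained in $\{s:Cs^{-\alpha}\|\bm u\|_{\mathbb{L}_\sigma^{\rho\otimes}(D)}>\lambda\}$, whose Lebesgue measure does not exceed $(C\|\bm u\|_{\mathbb{L}_\sigma^{\rho\otimes}(D)}/\lambda)^{1/\alpha}=(C\|\bm u\|_{\mathbb{L}_\sigma^{\rho\otimes}(D)}/\lambda)^{q_\rho}$. Hence $\lambda\,\mu_{T\bm u}(\lambda)^{1/q_\rho}\le C\|\bm u\|_{\mathbb{L}_\sigma^{\rho\otimes}(D)}$ for all $\lambda>0$, i.e. $[T\bm u]_{L^{(q_\rho)}(0,T)}\le C\|\bm u\|_{\mathbb{L}_\sigma^{\rho\otimes}(D)}$, so $T$ is of weak type $(\rho,q_\rho)$. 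The key cancellation $\alpha q_\rho=1$ is exactly the scaling relation.

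Next I would pick source exponents $r_0,r_1$ with $1<r_0<r<r_1<p$, close enough to $r$ that the associated critical time-exponents $q_i=1/\alpha_i$ (with $\alpha_i=\frac d2(\frac1{r_i}-\frac1p)$) still satisfy $q_i>r_i$; this is possible by continuity since $q>r$ is assumed, and it guarantees $q_0\ne q_1$ and the hypotheses of the off-diagonal Marcinkiewicz interpolation theorem. Choosing $\theta\in(0,1)$ with $\frac1r=\frac{1-\theta}{r_0}+\frac\theta{r_1}$, the interpolated target exponent is $\frac{1-\theta}{q_0}+\frac\theta{q_1}=(1-\theta)\alpha_0+\theta\alpha_1=\frac d2(\frac1r-\frac1p)=\frac1q$, so the interpolation lands exactly on $q$. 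The generalized Marcinkiewicz interpolation theorem then upgrades the two weak-type bounds to the strong bound $\|T\bm u\|_{L^q(0,T)}\le C\|\bm u\|_{\mathbb{L}_\sigma^{r\otimes}(D)}$, which is the claim (taking $\int_0^t\le\int_0^T$ since the integrand is nonnegative, with $C=C(p,q,r)$ independent of $t\in[0,T]$).

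The main obstacle is the criticality itself: every matched pair $(\rho,q_\rho)$ yields the non-integrable $s^{-1}$ singularity, so no choice of exponents makes the direct computation converge and one only obtains weak-type information at the critical line. The genuine work is therefore the bookkeeping that produces two \emph{distinct} critical exponents $q_0\ne q_1$ by perturbing the \emph{source} exponent rather than $p$, so that $T$ remains a single operator, together with checking the Marcinkiewicz hypotheses $q_i>r_i$; after that the strong estimate follows by interpolation.
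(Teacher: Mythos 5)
Your proof is correct, and its overall strategy is the same as the paper's: recast the claim as boundedness of the sublinear map $v \mapsto \|\bm{S}(\cdot)v\|_{\mathbb{L}_{\sigma}^{p\otimes}(D)}$, note that the critical scaling in \eqref{MHD-semi-Lp-est} yields only weak-type (Marcinkiewicz) bounds, and upgrade to the strong $L^q$ bound via the off-diagonal Marcinkiewicz interpolation theorem \cite[Theorem 1.3.1]{BL-Interpolaiton}. The one place you genuinely diverge is the choice of interpolation endpoints. The paper pairs the critical weak-type estimate at $(r,q)$ (the same super-level-set computation you carry out, phrased there as $[t^{-\theta}]_{L^{(q)}}=1$ when $\theta q=1$) with the endpoint $(p,\infty)$: since $\|\bm{S}(t)v\|_{\mathbb{L}_{\sigma}^{p\otimes}(D)}\leq C_p\|v\|_{\mathbb{L}_{\sigma}^{p\otimes}(D)}$, the map is bounded from $\mathbb{L}_{\sigma}^{p\otimes}(D)$ into $L^{(\infty)}(0,T)$, and interpolation then gives the strong bound at all intermediate pairs. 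You instead perturb the \emph{source} exponent to $r_0<r<r_1<p$ and interpolate between two finite critical pairs $(r_0,q_0)$ and $(r_1,q_1)$. Both are valid, and both land exactly on $q$ for the same reason, namely that $1/q_\rho$ is affine in $1/\rho$. The trade-off: the paper's $(p,\infty)$ endpoint comes for free from semigroup boundedness and trivially satisfies the hypothesis $p\leq\infty$, so no continuity bookkeeping is needed; your route never leaves the family of smoothing estimates (no $L^\infty$-in-time endpoint), at the cost of the check $q_i>r_i$ near $r$, which you correctly justify by continuity. A minor presentational bonus of your version is that it delivers the lemma's stated pair $(r,q)$ directly, whereas the paper's proof, as written, produces strong bounds at pairs $(r_1,q_1)$ with $r_1$ strictly between the endpoint exponents and relies on relabeling to cover the stated case.
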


\begin{proof}

Consider the map $\Gamma$ defined by $\Gamma v=\|\bm{S}(t)  v\|_{\mathbb{L}_{\sigma}^{p\otimes}(D)}$ from $\mathbb{L}^{r\otimes}_{\sigma}(D)$ to functions on $[0, T)$. In view of \eqref{MHD-semi-Lp-est}, we have
\begin{align*}
\Big[\Gamma v\Big]_{L^{(q)}(0,T)} = \Big[\left\|\bm{S}(t)  v\right\|_{\mathbb{L}_{\sigma}^{p\otimes}(D) }\Big]_{L^{(q)}(0,T)} \leq  \Big[  C_{p,r}\, t^{-\frac{d}{2}\left(\frac1r-\frac1p\right)}\|v\|_{\mathbb{L}_{\sigma}^{r\otimes}(D)}   \Big]_{L^{(q)}(0,T)} =C_{p,r}\|v\|_{\mathbb{L}_{\sigma}^{r\otimes}(D)},
\end{align*}
 since $\frac{1}{q}=\left(\frac{1}{r}-\frac{1}{p}\right) \frac{d}{2}$ and $[t^{-\theta}]_{L^{(p)}}=1$ if $\theta p=1$. Then $\Gamma$ is continuous from $\mathbb{L}_{\sigma}^{r\otimes}(D)$ into $L^{(q)}(0,T)$. 
Since $|\Gamma v|= \left\|\bm{S}(t) v\right\|_{\mathbb{L}_{\sigma}^{p\otimes}(D)} \leq C_p\|v\|_{\mathbb{L}_{\sigma}^{p\otimes}(D)}$,  we have
$
     \big[\Gamma v\big]_{L^{(\infty)}(0,T)}\leq   C_{p}\|v\|_{\mathbb{L}_{\sigma}^{p\otimes}(D)}.
$
Then $\Gamma$ is continuous from $\mathbb{L}^{p\otimes}_{\sigma}(D)$ into $L^{(\infty)}(0,T)$. Now applying the Marcinkiewicz interpolation theorem, see e.g. \cite[Theorem 1.3.1]{BL-Interpolaiton}, we infer that $\Gamma$ is continuous from $\mathbb{L}_{\sigma}^{r_1\otimes}(D)$ into $L^{q_1}(0,T)$, for $r_1< q_1$ with $\frac1{q_1}=\big(\frac{1}{r_1}-\frac{1}{p}\big) \frac{d}{2}$.

\end{proof}

We define the following bilinear operator, for $\bm{u}_i=(v_i,H_i)^{T}\in \mathbb{C}_c^{\infty}(D)\times \mathbb{C}_c^{\infty}(D)$, $i=1,2$,
 $$\bm{B}(\bm{u}_1, \bm{u}_2):=\big( B_1(\bm{u}_1,\bm{u}_2) ,  B_2(\bm{u}_1,\bm{u}_2)\big)^T ,$$
with $B_1(\bm{u}_1, \bm{u}_2)= P(H_1 \cdot \nabla H_2)-P(v_1 \cdot \nabla v_2)$ and $B_2(\bm{u}_1,\bm{u}_2)=H_1\cdot \nabla v_2- v_1 \cdot \nabla H_2$.

 Consider the  trilinear form   
$
b(u, w, v)=\int_{D}(u \cdot \nabla w) v\, \d x,
$
for $ u \in \mathbb{C}_c^{\infty}(D), w, v \in \mathbb{C}_c^{\infty}(D)$. Note that if $\operatorname{div} u=0$, then 
$\sum_{i=1}^d \frac{\partial}{\partial x_i}(u_i w)=u \cdot \nabla w.$ We have $b(u, w, v)= -b(u, v, w) $ and for  $b(u, v, v)=0$, $u,w,v \in  \mathbb{C}_c^{\infty}(D)$ with $\operatorname{div} u=0$.  
Thus if $u \in \mathbb{C}_c^{\infty}(D)$, $\operatorname{div} u=0$, and $w, v \in \mathbb{C}_c^{\infty}(D)$, then by using the Hölder's inequality there exists a constant $c>0$ such that
\begin{align}\label{est-tri-b-1}
|b(u, w, v)| & =|b(u, v, w)|
  \leq\|u\|_{\mathbb{L}^4(D)}\|w\|_{\mathbb{L}^4(D)}\|\nabla v\|_{\mathbb{L}^2(D)} \leq\|u\|_{\mathbb{H}^1(D)}\|w\|_{\mathbb{H}^1(D)}\| v\|_{\mathbb{H}^1(D)}.
\end{align}
where we used the Sobolev embedding $\mathbb{H}^1(D) \subset \mathbb{L}^4(D)$, for $d=2,3$. 
Thus $b$ can be extended to a trilinear continuous form on $\mathbb{H}^1_{\sigma}(D)\times \mathbb{H}^1(D)\times \mathbb{H}^1(D)$.

Let
$(\Omega,\mathcal{F},\mathbb{F},\mathbb{P})$, where $\mathbb{F}=(\mathcal{F}_t)_{t\geq0}$,
be a filtered probability space satisfying the usual hypothesis. Let $\mathcal{P}$ be a predictable $\sigma$-field on $[0,T]\times\Omega$. 
Let $\tilde{N}_i(d t, d z)=N_i(\mathrm{d} t, \mathrm{d} z)-\nu_i(\mathrm{d} z) \mathrm{d} t$, $i=1,2,$ be two  compensated Poisson random measures with non-negative $\sigma$-finite intensity measures $\nu_i$ on measurable spaces $(Z_i,\mathcal{Z}_i)$, $i=1,2$.
\begin{assumption}\label{assu-xi} Let $r\geq d$. 
Assume that $\xi_i:[0,T]\times \Omega\times Z_i\rightarrow \mathbb{L}_{\sigma}^{r}(D)$ are two $\mathcal{P} \otimes \mathcal{Z}$-measurable maps such that $$\EE \int_0^T\int_{Z_i}\|\xi(s,z)\|_{ \mathbb{L}_{\sigma}^{r}(D)}^2\nu_i(\d z)\d s<\infty.$$
\end{assumption}

Let $Z:=Z_1\times Z_2$, $\mathcal{Z}:=\mathcal{Z}_1\otimes \mathcal{Z}_2$, $\nu:=\nu_1\otimes \nu_2$ and $N:=(N_1,N_2)$.   Let us define the map $\xi(t,\omega,z):=\big(\xi_1(t,\omega,z_1), \xi_2(t,\omega,z_2)\big)$, where $t\in[0,T]$, $z=(z_1,z_2)\in Z$, $\omega\in\Omega$. Then $\xi:[0,T]\times\Omega\times Z \rightarrow  \mathbb{L}_{\sigma}^{r\otimes}(D)$ is a $\mathcal{P} \otimes \mathcal{Z}$-measurable map and $\mathbb{E} \int_0^T\int_Z\|\xi(t, z)\|_{\mathbb{L}^{r\otimes}_{\sigma}(D)}^2 \nu(\mathrm{d} z) \mathrm{d} t<\infty .$

Working with the sectorial positive operator $\bmA$ we rewrite equation \eqref{MHD-eq}  by the following Cauchy problem of abstract evolution equations in the Banach space $\mathbb{L}_\sigma^{r\otimes}(D)$:
\begin{align}\label{MHD-eq-main}
\mathrm{d} \bm{u}(t)=[-\bmA\bm{u}(t)+\bm{B}(\bm{u}(t),\bm{u}(t))] \mathrm{d} st+\int_Z \xi(t, z) \tilde{N}(\d t, \d z),\; t\in(0,T],\quad \bm{u}(0)=\bm{u}_0.
\end{align}

\begin{remark}
According to \cite[Appendix B]{Brz95},  the spaces $\mathbb{L}_\sigma^{r\otimes}(D)$ with $2\leq r<\infty$ are martingale type 2 Banach spaces.  Under Assumption \ref{assu-xi}, the stochastic integral process $\int_0^t \int_Z \xi(s, z) \tilde{N}(\d s, \d z) $, $t\in[0,T]$ is well defined  and it is an $\mathbb{L}_\sigma^{r\otimes}(D)$-valued c\`adl\`ag $\mathbb{F}$-martingale, see \cite{BE09, Zhu+Brz+Liu-2019} for more details. 
\end{remark}

Note that in the above formulation $\bm{u}$ denotes the transpose of $(v, H)$ and the initial condition $\bm{u}_0=(v_0,H_0)$.

\begin{definition}
An $\mathbb{L}^{r\otimes}_{\sigma}(D)$-valued adapted process $\bm{u}(t), t \in[0, T]$, is a mild solution to \eqref{MHD-eq-main}  if 
$\bm{u} \in L^q(0, T ; \mathbb{L}_{\sigma}^{p\otimes}(D))$, $\mathbb{P}$-a.s. 
and for all $t \in[0, T]$, the following equality holds $\mathbb{P}$-a.s.
\begin{align}\label{local solution}
\bm{u}(t)=\bm{u}_0+\int_0^t \bm{S}(t-s) \bm{B}(\bm{u}(s), \bm{u}(s))\, \mathrm{d} s+\int_0^t \bm{S}(t-s) \xi(s, z)\, \tilde{N}(\mathrm{d} s, \mathrm{d} z).
\end{align}
\end{definition}

\section{Existence and uniqueness}\label{sec-lcoal-ex}
\begin{lemma}\label{lem-A-1/2-s}

For $ \bm{u}_1\in \mathbb{L}^{p\otimes}_{\sigma}(D),\; \bm{u}_2\in \mathbb{L}^{p\otimes}(D)$, $1<p<\infty$,  there exists a contant $C_p>0$ such that
\begin{align}
      \| \bmA^{-\frac12} \bm{B}(\bm{u}_1,\bm{u}_2)\|_{\mathbb{L}^{\frac{p}2\otimes}(D)}&\leq C_p\,\|\bm{u}_1\|_{\mathbb{L}^{p\otimes}(D)}\|\bm{u}_2\|_{\mathbb{L}^{p\otimes}(D)},\label{Lemma-A-1/2-eq-1}\\
			 \| \bmA^{-\frac12} \bm{B}(\bm{u}_1,\bm{u}_1)-\bmA^{-\frac12} \bm{B}(\bm{u}_2,\bm{u}_2)\|_{\mathbb{L}^{\frac{p}2\otimes}(D)}
			 & \leq  C_p\,\|\bm{u}_1-\bm{u}_2\|_{\mathbb{L}^{p\otimes}(D)}\Big(\|\bm{u}_1\|_{\mathbb{L}^{p\otimes}(D)}+\|\bm{u}_2\|_{\mathbb{L}^{p\otimes}(D)}\Big).\label{Lemma-A-1/2-eq-2}
		\end{align}
\end{lemma}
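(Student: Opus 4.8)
The plan is to use the diagonal structure $\bmA^{-1/2}(w_1, w_2) = (A^{-1/2} w_1, \mathcal{M}^{-1/2} w_2)$, which splits each inequality into a hydrodynamic and a magnetic component, and to convert every convective term into the divergence of a tensor product by exploiting that the first argument $\bm{u}_1 = (v_1, H_1)$ is divergence-free. Concretely, since $\operatorname{div} v_1 = \operatorname{div} H_1 = 0$ I would write $v_1 \cdot \nabla v_2 = \operatorname{div}(v_1 \otimes v_2)$, $H_1 \cdot \nabla H_2 = \operatorname{div}(H_1 \otimes H_2)$, and similarly $H_1 \cdot \nabla v_2 = \operatorname{div}(H_1 \otimes v_2)$, $v_1 \cdot \nabla H_2 = \operatorname{div}(v_1 \otimes H_2)$, so that the two components of $\bmA^{-1/2} \bm{B}(\bm{u}_1, \bm{u}_2)$ reduce to $A^{-1/2} P \operatorname{div}(H_1 \otimes H_2 - v_1 \otimes v_2)$ and $\mathcal{M}^{-1/2} P \operatorname{div}(H_1 \otimes v_2 - v_1 \otimes H_2)$, the projection $P$ being inserted so that the magnetic nonlinearity lands in the divergence-free space on which $\mathcal{M}^{-1/2}$ acts.

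The crux --- and the step I expect to be the main obstacle --- is the claim that $A^{-1/2} P \operatorname{div}$ and $\mathcal{M}^{-1/2} P \operatorname{div}$ extend to bounded operators from the tensor-valued space $\mathbb{L}^{p/2}(D)$ into $\mathbb{L}^{p/2}_\sigma(D)$. I would prove this by duality. For a tensor field $w \in \mathbb{L}^{p/2}(D)$ and a test field $\phi \in \mathbb{L}^{(p/2)'}_\sigma(D)$, formal self-adjointness of $A^{-1/2}$ and $P$, together with an integration by parts, gives
\[
\langle A^{-1/2} P \operatorname{div} w, \phi \rangle = \langle \operatorname{div} w, A^{-1/2} \phi \rangle = -\langle w, \nabla A^{-1/2} \phi \rangle,
\]
where the boundary contribution vanishes because the Dirichlet condition built into $\mathscr{D}(A)$ forces $A^{-1/2}\phi$ to have zero trace. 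The right-hand side is bounded by $\|w\|_{\mathbb{L}^{p/2}(D)} \|\nabla A^{-1/2}\phi\|_{\mathbb{L}^{(p/2)'}(D)}$, and the $\gamma = 1/2$ case of the embedding $\mathscr{D}(A^\gamma) \hookrightarrow \mathbb{H}^{2\gamma, r}(D)$ recalled before \eqref{MHD-semi-Lp-est} yields $\|\nabla A^{-1/2}\phi\|_{\mathbb{L}^{(p/2)'}(D)} \leq C \|\phi\|_{\mathbb{L}^{(p/2)'}(D)}$. Taking the supremum over $\phi$ gives the desired operator bound; the identical computation with $\mathscr{D}(\mathcal{M}^{1/2}) \hookrightarrow \mathbb{H}^{1,(p/2)'}(D)$ and the boundary condition $\bm{n}\times\operatorname{curl} H = 0$ defining $\mathscr{D}(\mathcal{M})$ handles the magnetic component. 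This duality requires the conjugate exponent $(p/2)'$ to lie in $(1,\infty)$, which is the range relevant to the sequel.

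Granting the operator bounds, estimate \eqref{Lemma-A-1/2-eq-1} follows at once by applying them to each tensor product and invoking Hölder's inequality, $\|H_1 \otimes H_2\|_{\mathbb{L}^{p/2}(D)} \leq \|H_1\|_{\mathbb{L}^p(D)} \|H_2\|_{\mathbb{L}^p(D)}$ and likewise for the remaining three products, then summing and bounding the result by $C_p \|\bm{u}_1\|_{\mathbb{L}^{p\otimes}(D)} \|\bm{u}_2\|_{\mathbb{L}^{p\otimes}(D)}$. For the Lipschitz-type bound \eqref{Lemma-A-1/2-eq-2} I would use the bilinearity of $\bm{B}$ to write
\[
\bm{B}(\bm{u}_1, \bm{u}_1) - \bm{B}(\bm{u}_2, \bm{u}_2) = \bm{B}(\bm{u}_1 - \bm{u}_2, \bm{u}_1) + \bm{B}(\bm{u}_2, \bm{u}_1 - \bm{u}_2),
\]
and apply \eqref{Lemma-A-1/2-eq-1} to each summand, taking both $\bm{u}_1, \bm{u}_2 \in \mathbb{L}^{p\otimes}_\sigma(D)$ so that the first slot is divergence-free as the duality step demands; collecting terms produces the factor $\|\bm{u}_1\|_{\mathbb{L}^{p\otimes}(D)} + \|\bm{u}_2\|_{\mathbb{L}^{p\otimes}(D)}$. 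Throughout, the integrations by parts and the manipulations of $\bm{B}$ are first justified for $\bm{u}_i \in \mathbb{C}_c^\infty(D) \times \mathbb{C}_c^\infty(D)$, where $\bm{B}$ is literally defined, and then extended to the stated spaces by the density used to introduce $\mathbb{L}^{p}_\sigma(D)$ together with the continuity just obtained.
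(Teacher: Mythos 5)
Your proposal is correct and follows essentially the same route as the paper: both rewrite the convective terms in divergence form using the solenoidality of the first argument, reduce everything to the boundedness of $A^{-\frac12}P\operatorname{div}$ and $\mathcal{M}^{-\frac12}\operatorname{div}$ from tensor-valued $\mathbb{L}^{p/2}(D)$ into $\mathbb{L}^{p/2}_{\sigma}(D)$, and conclude with H\"older's inequality and bilinearity. The only difference is that the paper simply cites \cite[Lemma 2.1]{Giga+Miya} for this operator bound, whereas you reprove it via the duality identity $\langle A^{-\frac12}P\operatorname{div} w,\phi\rangle=-\langle w,\nabla A^{-\frac12}\phi\rangle$ combined with the embedding $\mathscr{D}(A^{1/2})\hookrightarrow \mathbb{H}^{1,(p/2)'}(D)$, which is in substance the proof of that cited lemma.
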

\begin{proof}
The proof follows immediately from the fact that $(f \cdot \nabla) g=\sum_{j=1}^d \frac{\partial}{\partial x_j}(f_jg)$, if $\operatorname{div} f=0$ and by \cite[Lemma 2.1]{Giga+Miya}, $A^{-\frac12} P \frac{\partial}{\partial x_j}$ and  $\mathcal{M}^{-\frac12} \frac{\partial}{\partial x_j}$ can be extended uniquely to bounded linear operators from $\mathbb{L}_{\sigma}^{p/2}(D)$ to $\mathbb{L}_{\sigma}^{p/2}(D)$. Recall that if $D=\RR^d$,  $P$ commutes with $\frac{\partial}{\partial x_j}$ and $-\Delta$, and $A=-P\Delta$ is essentially identical with $-\Delta$, so \eqref{Lemma-A-1/2-eq-1} and \eqref{Lemma-A-1/2-eq-2} still hold.

\end{proof}
\begin{theorem}\label{main}
Let $d=2,3$ and $r\geq d$. Assume that $\bm{u}_0 \in \mathbb{L}^{r\otimes}_{\sigma}(D)$ 
and  Assumption \ref{assu-xi} holds. 
 Let $\frac{1}{q}=(\frac{1}{r}-\frac{1}{p})\frac{d}{2}$,  $r<p<\infty $. Then there exists a unique local mild solution $\bm{u} \in L^q(0,T;\mathbb{L}_{\sigma}^{p\otimes}(D))$ to equation \eqref{MHD-eq-main}.
\end{theorem}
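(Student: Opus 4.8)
The plan is to recast the mild formulation \eqref{local solution} (with the linear term read as $\bm{S}(t)\bm{u}_0$, as forced by $\bm{u}_0\in\mathbb{L}_\sigma^{r\otimes}(D)\setminus\mathbb{L}_\sigma^{p\otimes}(D)$) as a fixed point problem in the scaling-invariant space $X_{T'}:=L^q(0,T';\mathbb{L}_\sigma^{p\otimes}(D))$ and to apply the Banach contraction principle on a suitable, possibly random, horizon. Writing the free term $\bm{a}(t):=\bm{S}(t)\bm{u}_0+\bm{z}(t)$, where $\bm{z}(t):=\int_0^t\bm{S}(t-s)\xi(s,z)\,\tilde N(\d s,\d z)$ is the stochastic convolution, a mild solution is a fixed point of
\[
\Phi(\bm{u})(t)=\bm{a}(t)+\int_0^t\bm{S}(t-s)\bm{B}(\bm{u}(s),\bm{u}(s))\,\d s .
\]
Since the noise is additive, $\bm{z}$ is a fixed process that does not enter the iteration, so the problem splits into (i) controlling $\bm{a}$ in $X_T$ and (ii) a pathwise bilinear estimate for the Duhamel term.

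For (i), the deterministic part is handled directly by Lemma \ref{lemma-est-u_0}, which gives $\bm{S}(\cdot)\bm{u}_0\in X_T$ with $\|\bm{S}(\cdot)\bm{u}_0\|_{X_T}\le C\|\bm{u}_0\|_{\mathbb{L}_\sigma^{r\otimes}(D)}$. The key idea for the stochastic part — and the main novelty — is to realise $\bm{z}$ not as a process but as a single stochastic integral valued in the Banach space $E:=L^q(0,T;\mathbb{L}_\sigma^{p\otimes}(D))$. Indeed, setting $G(s,z):=\big[t\mapsto \mathbf{1}_{\{t>s\}}\,\bm{S}(t-s)\xi(s,z)\big]$, Lemma \ref{lemma-est-u_0} applied on the shifted interval $(s,T)$ yields $\|G(s,z)\|_E\le C\|\xi(s,z)\|_{\mathbb{L}_\sigma^{r\otimes}(D)}$, so $G$ is a $\mathcal{P}\otimes\mathcal{Z}$-measurable $E$-valued integrand. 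Because $\mathbb{L}_\sigma^{p\otimes}(D)$ is of martingale type $2$ and $q>2$ here (from $r\ge d$ one checks $\tfrac1q=(\tfrac1r-\tfrac1p)\tfrac d2<\tfrac12$), the space $E$ is again of martingale type $2$, and the Burkholder inequality in martingale type $2$ Banach spaces gives $\mathbb{E}\|\bm{z}\|_E^2\le C\,\mathbb{E}\int_0^T\int_Z\|\xi(s,z)\|_{\mathbb{L}_\sigma^{r\otimes}(D)}^2\,\nu(\d z)\,\d s<\infty$ under Assumption \ref{assu-xi}, whence $\bm{z}\in X_T$ almost surely. A stochastic Fubini argument is needed to identify this $E$-valued integral with the trajectory $t\mapsto\bm{z}(t)$, but this is routine once the above bound is in hand.

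For (ii), write $\bm{B}=\bmA^{1/2}\bmA^{-1/2}\bm{B}$ and split $\bm{S}(t-s)=\bm{S}(\tfrac{t-s}{2})\bm{S}(\tfrac{t-s}{2})$. Combining the bilinear bound $\|\bmA^{-1/2}\bm{B}(\bm{u}_1,\bm{u}_2)\|_{\mathbb{L}^{p/2\otimes}(D)}\le C\|\bm{u}_1\|_{\mathbb{L}^{p\otimes}(D)}\|\bm{u}_2\|_{\mathbb{L}^{p\otimes}(D)}$ of Lemma \ref{lem-A-1/2-s} with \eqref{ineq-semi-pro-1-MHD} (for the factor $\bmA^{1/2}$) and the smoothing \eqref{MHD-semi-Lp-est} (from $\mathbb{L}^{p/2\otimes}$ to $\mathbb{L}^{p\otimes}$) gives
\[
\big\|\bm{S}(t-s)\bm{B}(\bm{u}_1,\bm{u}_2)\big\|_{\mathbb{L}_\sigma^{p\otimes}(D)}\le C\,(t-s)^{-\beta}\,\|\bm{u}_1\|_{\mathbb{L}^{p\otimes}(D)}\|\bm{u}_2\|_{\mathbb{L}^{p\otimes}(D)},\qquad \beta=\tfrac12+\tfrac{d}{2p}.
\]
Since $p>d$ we have $\beta<1$, and the scaling relation forces $\beta=1-\tfrac1q$ exactly in the marginal case $r=d$. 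The Duhamel term is then controlled by convolving $(t-s)^{-\beta}$ against $\|\bm{u}_1(\cdot)\|_{\mathbb{L}^{p\otimes}}\|\bm{u}_2(\cdot)\|_{\mathbb{L}^{p\otimes}}\in L^{q/2}(0,T')$; the Hardy--Littlewood--Sobolev / generalized Young inequality in Marcinkiewicz (Lorentz) spaces yields $\|\int_0^{\cdot}\bm{S}(\cdot-s)\bm{B}(\bm{u}_1,\bm{u}_2)\,\d s\|_{X_{T'}}\le C\|\bm{u}_1\|_{X_{T'}}\|\bm{u}_2\|_{X_{T'}}$, and its Lipschitz version follows from \eqref{Lemma-A-1/2-eq-2}.

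Finally, since $\bm{a}\in X_T$ almost surely, the absolute continuity of the Lebesgue integral gives $\|\bm{a}\|_{X_{T'}}\to 0$ as $T'\downarrow 0$ pathwise; introducing a stopping time $\tau>0$ on which $\|\bm{a}\|_{X_\tau}$ is as small as required, $\Phi$ maps a small ball of $X_\tau$ into itself and is a contraction there, and the contraction principle produces the unique local mild solution. I expect the main obstacle to be the marginal/critical case $r=d$: there $\beta=1-\tfrac1q$ sits exactly at the borderline of integrability, the convolution kernel belongs only to the weak space $L^{(1/\beta)}$, and one cannot gain a positive power of $T'$ from the bilinear term. Smallness must instead be extracted from the vanishing of $\|\bm{a}\|_{X_{T'}}$ together with a small ball radius, and the bilinear estimate must be carried out in Lorentz spaces rather than by the elementary Young inequality — this is precisely where Lemma \ref{lemma-est-u_0} and the Marcinkiewicz machinery become indispensable, and where uniqueness within the scaling-invariant class $L^q(0,T;\mathbb{L}_\sigma^{p\otimes}(D))$ (rather than $C([0,T);\mathbb{L}_\sigma^{d\otimes}(D))$) is secured.
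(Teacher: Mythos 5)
Your proposal is correct and follows essentially the same route as the paper's own proof: you treat the stochastic convolution as a single $L^q(0,T;\mathbb{L}^{p\otimes}_{\sigma}(D))$-valued stochastic integral controlled via Lemma \ref{lemma-est-u_0} and the Burkholder inequality in martingale type $2$ Banach spaces, and you estimate the Duhamel term by the $\bmA^{\pm 1/2}$ splitting of Lemma \ref{lem-A-1/2-s} combined with \eqref{ineq-semi-pro-1-MHD}, \eqref{MHD-semi-Lp-est} and the Hardy--Littlewood--Sobolev inequality, extracting smallness in the marginal case $r=d$ from the vanishing of the free term rather than from a power of the time horizon, exactly as the paper does. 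The only cosmetic difference is that you run the fixed point iteration on $\bm{u}$ itself with free term $\bm{S}(\cdot)\bm{u}_0+\bm{z}$, whereas the paper substitutes $\bm{Y}=\bm{u}-\bm{Z}$ and iterates on $\bm{Y}$; since the noise is additive these formulations are equivalent.
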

\begin{proof}
\textbf{Step 1.} Consider the following stochastic  process
$\bm{Z}(t)=\int_{0}^{t}\int_Z \bm{S}(t) \xi(s,z) \tilde{N}(\mathrm{d} s,\mathrm{d} z)$. 
Since $\mathbb{L}^{r\otimes}_{\sigma}(D)$, $r\geq d\geq 2$ is a martingale type $2$ Banach space,  by using the Burkholder inequality and the boundedness of the semigroup, we have, for any $t\in[0,T]$,
	\begin{align*}
	\mathbb{E}\|Z(t)\|_{\mathbb{L}^{r\otimes}_{\sigma}(D)}
	&\leq C_{r}\mathbb{E}\, \Big(\int_0^t\int_Z\|\bm{S}(t) \xi(s,z) \|_{\mathbb{L}_{\sigma}^{r\otimes}(D)}^2 \nu( \mathrm{d}z)\mathrm{d}s\Big)^\frac{1}{2}\\
	&\leq C_{r}\mathbb{E}\, \Big(\int_0^t\int_Z \|\xi(s,z)\|_{\mathbb{L}^{r\otimes}_{\sigma}(D)}^2 \nu(\mathrm{d} z)\mathrm{d} s\Big)^\frac{1}{2}<\infty.
	\end{align*}
It follows that $Z(t)\in \mathbb{L}^{r\otimes}_{\sigma}(D)$, $\mathbb{P}$-a.s.
For an $\mathbb{L}^{r\otimes}_{\sigma}(D)$-valued predictable process $\xi$, we define an $L^q(0, T ; \mathbb{L}_{\sigma}^{p\otimes}(D))$-valued process
 $\psi_{s,z}:=\big\{[0, T] \ni t \mapsto 1_{[s, T]}(t) \bm{S}(t-s)\xi(s, \omega,z)\big\}$,  $s \in[0, T],\,\omega\in\Omega,\, z \in Z.$ By using Lemma \ref{lemma-est-u_0}, we infer
	\begin{align}\label{proof-main-theo-psi-eq1}
	\|\psi_{s,z}(\cdot)\|_{L^q(0,T;\mathbb{L}^{p\otimes}(D))}^q & =\int_s^T\left\|\bm{S}(t-s) \xi(s, z)\right\|_{\mathbb{L}_{\sigma}^{p\otimes}(D)}^q \mathrm{d} t \nonumber\\
	 &=\int_0^{T-s}\left\|\bm{S}(t) \xi(s, z)\right\|_{\mathbb{L}_{\sigma}^{p\otimes}(D)}^q  \mathrm{d} t\nonumber\\
	&\leq C_{p,q,r} \|\xi(s,z) \|_{\mathbb{L}^{r\otimes}(D)}^q.
		\end{align}
Note that $L^q(0, T ;\mathbb{L}_{\sigma}^{p\otimes}(D)\big)$ is a martingale type 2 Banach space. By applying Burkholder's inequality in Banach space \cite[Theorem 3.1]{Zhu+Brz+Liu-2019} and \eqref{proof-main-theo-psi-eq1} we find 
\begin{align}
	\mathbb{E}\|Z(t)\|_{L^q(0,T;\mathbb{L}_{\sigma}^{p\otimes}(D))}&=\mathbb{E}\Big\| \int_{0}^{t}\int_Z \bm{S}(t) \xi(s,z) \tilde{N}(\mathrm{d} s,\mathrm{d} z)    \Big\|_{L^q(0,T;\mathbb{L}_{\sigma}^{p\otimes}(D))}\nonumber\\
	&=
	\mathbb{E}\Big\| \int_{0}^{t}\int_Z \psi_{s,z}(\cdot) \tilde{N}(\mathrm{d} s,\mathrm{d} z)    \Big\|_{L^q(0,T;\mathbb{L}_{\sigma}^{p\otimes}(D))}\nonumber\\
	&\leq C_{p}\mathbb{E} \Big(\int_0^T\int_Z \|\psi_{s,z}(\cdot)\|^2_{L^q(0,T;\mathbb{L}_{\sigma}^{p\otimes}(D))}\nu(\mathrm{d} z)\mathrm{d} s\Big)^{\frac12}\nonumber\\
	&\leq C_{p,q,r} \mathbb{E} \Big(\int_0^T\int_Z \|\xi(s,z)\|^2_{\mathbb{L}_{\sigma}^{r\otimes}(D)}\nu(\mathrm{d} z)\mathrm{d} s\Big)^{\frac{1}{2}}<\infty.\label{est-con-Z}
	\end{align}
That is $\bm{Z}\in L({\Omega};L^{q}(0,T;\mathbb{L}_{\sigma}^p(D)))$. Meanwhile, by using the above estimate and the Lebesgue dominated convergence theorem, we infer $\lim_{T'\rightarrow 0}\mathbb{E}\|Z(t)\|_{L^q(0,T';\mathbb{L}_{\sigma}^{p\otimes}(D))}=0 $. 

\textbf{Step 2.} 
Set $\bm{Y}:=\bm{u}-\bm{Z}$, with $\bm{u}$ being the solution to equation \eqref{MHD-eq-main}. Define an operator
	\begin{align*} \Gamma(\bm{Y}) := \bm{S}(t)\bm{u}_0 + \int_{0}^{t}\bm{S}(t-s)\bm{B}\big(\bm{Y}(s)+\bm{Z}(s),\bm{Y}(s)+\bm{Z}(s)\big)\mathrm{d} s,\quad  t\in [0,T],
	\end{align*}
	where $\bm{Y}\in L^q(0,T;\mathbb{L}_{\sigma}^{p\otimes}(D))$.  To establish the existence of mild solutions to \eqref{MHD-eq-main}, it is equivalent to finding a fixed point for the operator $\Gamma$ in $L^{q}(0,T;\mathbb{L}^{p\otimes}_{\sigma}(D))$. 
	We shall look for a random time $\tau$  such that $\Gamma(B^\tau_R)\subset B^\tau_R$ and $\Gamma$ is a contraction map on a closed ball $B_R^\tau$ in $L^q(0,\tau;\mathbb{L}_{\sigma}^p(D))$. 
	
Let $r\geq d$.  Since by Step 1, $\lim_{T'\rightarrow 0}\mathbb{E}\|Z(t)\|_{L^q(0,T';\mathbb{L}_{\sigma}^{p\otimes}(D))}=0 $,  so there exists a sequence $\{T_n\}\subset[0,T]$ such that 
     $\lim_{T_n \rightarrow0} \| Z(\cdot)\|_{L^q(0,T_n;\mathbb{L}_{\sigma}^p(D))}=0,$ $\mathbb{P}$-a.s. 
Hence for every $\varepsilon>0$, there exists $T_{\varepsilon}$ such that $ \| Z(\cdot)\|_{L^q(0,T_{\varepsilon};\mathbb{L}_{\sigma}^p(D))}<\varepsilon$, $ \mathbb{P}$-a.s. 
	In view of Lemma \ref{lemma-est-u_0}, we have
$ \left\|\bm{S}(t)\bm{u}_0\right\|_{L^q\left(0,T; \mathbb{L}_{\sigma}^p(D)\right)}\leq C_0\left\| \bm{u}_0\right\|_{\mathbb{L}^r},$ 
hence $ \lim_{T'\rightarrow 0} \||\bm{S}(t)\bm{u}_0\|_{L^q(0,T'; \mathbb{L}_{\sigma}^p(D))}=0$. 
Therefore, 
we can always find  $0<T_0<T$ and $\Omega_0 \in \mathcal{F}$ with $\mathbb{P}(\Omega_0)=1$ such that for  $d\leq r<p$,
\begin{align*}
        \|\bm{S}(t)\bm{u}_0\|_{L^q(0,T_0;\mathbb{L}_{\sigma}^{p\otimes}(D))}+  \|\bm{Z}(\cdot)\|_{L^q(0,T_0;\mathbb{L}^{p\otimes}_{\sigma}(D))}<(10 T_0^{\frac12-\frac{d}{2r}} K_1)^{-1},
\end{align*}
where the constant $K_1$ depends on $d,r,p,q$ and will be determined later.

If $\omega\in\Omega_0$, define $\tau(\omega)=T_0$, if $\omega\notin\Omega_0$, define $\tau(\omega)=0$. Define a random variable 
	$N_\tau=\|\bm{S}(t)\bm{u}_0\|_{L^q(0,\tau;\mathbb{L}_{\sigma}^{p\otimes}(D))}+\|Z(t)\|_{L^q(0,\tau;\mathbb{L}_{\sigma}^{p\otimes}(D))}.$
	Let $B_R^\tau$ denote the closed ball in $L^q(0,\tau;\mathbb{L}_{\sigma}^p(D))$ with radious $R=2N_\tau$.  Next we shall prove $\Gamma( B_{R}^{\tau})\subset B_{R}^{\tau}$. 
	Let $\bm{Y}\in B_{R}^{\tau}$. By using \eqref{ineq-semi-pro-1-MHD} and Lemma \ref{lem-A-1/2-s}, we have
		\begin{align*}
&\Big\|   \int_{0}^{t}\bm{S}(t-s) \bm{B}\big(\bm{Y}(s)+\bm{Z}(s),\bm{Y}(s)+\bm{Z}(s)\big)\mathrm{d}s   \Big\|_{\mathbb{L}^{p\otimes}_{\sigma}(D)} \\
&\leq \int_0^t \Big\| \bmA^{\frac12}  \bm{S}\big(\frac{t-s}2\big) \bm{S}\big(\frac{t-s}2\big)\bmA^{-\frac12}  \bm{B}\big(\bm{Y}(s)+\bm{Z}(s),\bm{Y}(s)+\bm{Z}(s)\big) \Big\|_{\mathbb{L}^{p\otimes}_{\sigma}(D)}\mathrm{d}s\\
&\leq C_{d,p}  \int_0^t \Big(\frac{t-s}2\Big)^{-\big(\frac12+\frac{d}{2p}\big) } \big\|  \bmA^{-\frac12} \bm{B}\big(\bm{Y}(s)+\bm{Z}(s),\bm{Y}(s)+\bm{Z}(s)\big) \big\|_{\mathbb{L}^{\frac{p}2\otimes}_{\sigma}(D)}\mathrm{d}s\\
&\leq C_{d,p}\int_0^t (t-s)^{-(\frac12+\frac{d}{2p}) } \|\bm{Y}(s)+\bm{Z}(s)\|^2_{\mathbb{L}_{\sigma}^{p\otimes}(D)}\mathrm{d} s\\
&\leq C_{d,p}\tau^{\frac12-\frac{d}{2r}}\int_0^t (t-s)^{-\big(1-\frac{d}2(\frac1r-\frac{1}{p})\big)} \|\bm{Y}(s)+\bm{Z}(s)\|^2_{\mathbb{L}_{\sigma}^{p\otimes}(D)}\mathrm{d} s.
\end{align*}
Noticing  $ \frac{1}{q}= \frac{d}{2}\big(\frac{1}{r}-\frac{1}{p}\big)$ and
applying the Hardy-Littlewood-Sobolev inequality (c.f. \cite{Stein}), we obtain  
		\begin{align*}
&\Big\|   \int_{0}^{t}\bm{S}(t-s) \bm{B}\big(\bm{Y}(s)+\bm{Z}(s),\bm{Y}(s)+\bm{Z}(s)\big)\mathrm{d} s   \Big\|_{L^q(0,\tau;\mathbb{L}_{\sigma}^{p\otimes}(D))}\\
&\leq C_{d,p}K_{r,p,q}\tau^{\frac12-\frac{d}{2r}} \|\bm{Y}(s)+\bm{Z}(s)\|^2_{L^q(0,\tau;\mathbb{L}_{\sigma}^{p\otimes}(D))}\\
&\leq K_1\tau^{\frac12-\frac{d}{2r}} \Big(\|\bm{Y}(s)\|_{L^q(0,\tau;\mathbb{L}_{\sigma}^{p\otimes}(D))}+\|\bm{Z}(s)\|_{L^q(0,\tau;\mathbb{L}_{\sigma}^{p\otimes}(D))}\Big)^2,
\end{align*}	
with $K_1=C_{d,p}K_{r,p,q}$ and $K_{r,p,q}$ being the constant from the Hardy-Littlewood-Sobolev inequality.
It follows that 
\begin{align*}
 \|\Gamma (\bm{Y})\|_{L^q(0,\tau;\mathbb{L}^{p\otimes}_{\sigma}(D))} &\leq  \|\bm{S}(t)\bm{u}_0\|_{L^q(0,T_0;\mathbb{L}_{\sigma}^{p\otimes}(D))} +   K_1 \tau^{\frac12-\frac{d}{2r}}(\|\bm{Y}\|_{L^q(0,\tau;\mathbb{L}^{p\otimes}_{\sigma}(D)}+\|\bm{Z}\|_{L^q(0,\tau;\mathbb{L}_{\sigma}^{p\otimes}(D))})^2\\
 &<N_{\tau}+ K_1\tau^{\frac12-\frac{d}{2r}} 9N_{\tau}\cdot N_{\tau}<2N_{\tau}.
\end{align*}
Hence we obtain $\Gamma(Y)\in B_R^\tau$. 

Next we prove that $\Gamma$ is a contraction map in $B_R^\tau$. Let $\bm{Y}_1,\bm{Y}_2\in B_R^\tau$. Applying similar arguments as before, we have
\begin{align*}
    &\|\Gamma(\bm{Y}_1)-\Gamma(\bm{Y}_2)\|_{\mathbb{L}^p_{\sigma}(D)}\\
    &\leq \Big\| \int_{0}^{t}\bm{S}(t-s)\bm{B}(\bm{Y}_1-\bm{Y}_2,\bm{Y}_1+\bm{Z}) \mathrm{d} s \Big\|_{\mathbb{L}^{p\otimes}_{\sigma}(D)}+\Big\| \int_{0}^{t} \bm{B}(\bm{Y}_2+\bm{Z},\bm{Y}_1-\bm{Y}_2)\mathrm{d} s\Big\|_{\mathbb{L}^{p\otimes}_{\sigma}(D)}\\
          &\leq C_{d,p}\tau^{\big(\frac12-\frac{d}{2r}\big)} \int_0^t  (t-s)^{-\big(1-\frac{d}2(\frac1r-\frac{1}{p})\big)} \big(\|\bm{Y}_1\|_{\mathbb{L}^{p\otimes}_{\sigma}(D)}+ \|\bm{Y}_2\|_{\mathbb{L}^{p\otimes}_{\sigma}(D)}+2 \|\bm{Z}\|_{\mathbb{L}^{p\otimes}_{\sigma}(D)}\big)
          \|\bm{Y}_1-\bm{Y}_2\|_{\mathbb{L}^{p\otimes}_{\sigma}(D)} \mathrm{d} s .
\end{align*}
By using again the Hardy-Littlewood-Sobolev inequality, we infer
\begin{align*}
   & \|\Gamma(\bm{Y}_1)(t)-\Gamma(\bm{Y}_2)(t)\|_{L^q(0,\tau;\mathbb{L}_{\sigma}^p(D))}\\
   &\leq  C_{d,p}K_{p,r,q}\tau^{\frac12-\frac{d}{2r}}\big(\|\bm{Y}_1\|_{L^q(0,\tau;\mathbb{L}_{\sigma}^{p\otimes}(D))}+ \|\bm{Y}_2\|_{L^q(0,\tau;\mathbb{L}^{p\otimes}_{\sigma}(D))}+2\|\bm{Z}\|_{L^q(0,\tau;\mathbb{L}_{\sigma}^{p\otimes}(D))}\big)
   \|\bm{Y}_1-\bm{Y}_2\|_{L^q(0,\tau;\mathbb{L}_{\sigma}^{p\otimes}(D))}\\
   &\leq K_1\tau^{\frac12-\frac{d}{2r}}(6N_{\tau})\|\bm{Y}_1-\bm{Y}_2\|_{L^q(0,\tau;\mathbb{L}^{p\otimes}_{\sigma}(D))}\\
   &<\frac35\|\bm{Y}_1-\bm{Y}_2\|_{L^q(0,\tau;\mathbb{L}_{\sigma}^p(D))}.
\end{align*}

{\textbf{Step 3. Uniqueness}} Next we prove the uniqueness of the local solution.  Let $\bm{u}_1,\bm{u}_2$ be two local mild solutions  to equation \eqref{MHD-eq-main} given by \eqref{local solution}. Applying similar arguments as in Step 2 gives 
\begin{align*}
\Vert \bm{u}_1-\bm{u}_2\Vert_{\mathbb{L}_{\sigma}^{p\otimes}(D)}&=\Big\| \int_{0}^{t}\bm{S}(t-s)(B(\bm{u}_1(s),\bm{u}_1(s))-B(\bm{u}_2(s),\bm{u}_2(s)))\mathrm{d} s\Big\|_{L^{p\otimes}(D)}\\
&\leq  C_{d,p}\tau^{\frac12-\frac{d}{2r}}\int_0^t (t-s)^{-\big(1-\frac{d}2(\frac1r-\frac{1}{p})\big)}\| \bm{u}_1-\bm{u}_2\|_{\mathbb{L}^{p\otimes}_{\sigma}(D)}\big(\| \bm{u}_1\|_{\mathbb{L}^{p\otimes}_{\sigma}(D)}+\| \bm{u}_2\|_{\mathbb{L}^{p\otimes}_{\sigma}(D)}\big)\, \mathrm{d}s.
\end{align*}
Applying again the Hardy-Littlewood-Sobolev inequality gives
\begin{align*}
	\|\bm{u}_1(t)-\bm{u}_2(t)\|_{L^q(0,\tau;\mathbb{L}_{\sigma}^{p\otimes}(D))}
	&\leq K_1\tau^{\frac12-\frac{d}{2r} }\|\bm{u}_1-\bm{u}_2\|_{L^q(0,\tau;\mathbb{L}_{\sigma}^{p\otimes}(D))}\Big(\|\bm{u}_1\|_{L^q(0,\tau;\mathbb{L}_{\sigma}^{p\otimes}(D))} + \|\bm{u}_2\|_{L^q(0,\tau;\mathbb{L}_{\sigma}^{p\otimes}(D))}\Big)\\
	&\leq K_1\tau^{\frac12-\frac{d}{2r} }(4N_\tau)\|\bm{u}_1-\bm{u}_2\|_{L^q(0,\tau;\mathbb{L}_{\sigma}^{p\otimes}(D))}\\
	& \leq \frac{2}{5} \|\bm{u}_1-\bm{u}_2\|_{L^q(0,\tau;\mathbb{L}_{\sigma}^{p\otimes}(D))},
\end{align*}
This shows that $\bm{u}_1=\bm{u}_2$ in $L^q(0,\tau;\mathbb{L}_{\sigma}^{p\otimes}(D))$.

\end{proof}

\begin{theorem}\label{Th-2D}
Let $d=2$.  Assume that $\bm{u}_0 \in \mathbb{L}_{\sigma}^{2\otimes}(D)$ and Assumption \ref{assu-xi} holds with $r=2$. 
Let $\frac1q=\frac12-\frac1p$, $p>2$. Then there exists a unique global mild solution $\bm{u} \in L^q(0,T;\mathbb{L}_{\sigma}^{p\otimes}(D))$ to equation \eqref{MHD-eq-main}.
\end{theorem}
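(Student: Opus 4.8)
The plan is to reduce global existence to a pathwise a priori energy estimate in the energy space $\mathbb{L}^{2\otimes}_{\sigma}(D)$, exploiting that in two dimensions the prescribed exponents $\frac1q=\frac12-\frac1p$ are exactly scaling-critical. First I would note that the local theory is already in hand: since $r=d=2<p$, Theorem \ref{main} yields a unique local mild solution $\bm{u}=\bm{Y}+\bm{Z}$ on some $[0,T_0]$, where $\bm{Z}(t)=\int_0^t\int_Z \bm{S}(t-s)\xi(s,z)\,\tilde N(\d s,\d z)$ is the stochastic convolution and $\bm{Y}:=\bm{u}-\bm{Z}$. Repeating the Burkholder estimate of Step 1 of Theorem \ref{main} together with Lemma \ref{lemma-est-u_0} applied to the two admissible pairs $(p,q)$ and $(4,4)$ (both satisfy the hypotheses of the lemma when $r=2$, $d=2$), I obtain that, $\mathbb{P}$-a.s.,
\begin{align*}
\bm{Z}\in L^q(0,T;\mathbb{L}^{p\otimes}_{\sigma}(D))\cap L^4(0,T;\mathbb{L}^{4\otimes}_{\sigma}(D)).
\end{align*}
Since $\bm{Z}$ is defined on all of $[0,T]$, the only obstruction to a global solution is a blow-up of $\bm{Y}$, so everything reduces to bounding $\bm{Y}$ pathwise on $[0,T]$.

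Given a realisation of $\bm{Z}$, the field $\bm{Y}$ solves the random (but deterministic-in-time) equation $\partial_t\bm{Y}=-\bmA\bm{Y}+\bm{B}(\bm{Y}+\bm{Z},\bm{Y}+\bm{Z})$ with $\bm{Y}(0)=\bm{u}_0\in\mathbb{L}^{2\otimes}_{\sigma}(D)$. Testing with $\bm{Y}$ gives the energy identity
\begin{align*}
\frac12\frac{\d}{\d t}\|\bm{Y}\|_{\mathbb{L}^{2\otimes}_{\sigma}(D)}^2+\|\bmA^{1/2}\bm{Y}\|_{\mathbb{L}^{2\otimes}_{\sigma}(D)}^2=\big\langle \bm{B}(\bm{Y}+\bm{Z},\bm{Y}+\bm{Z}),\bm{Y}\big\rangle.
\end{align*}
Expanding the right-hand side by bilinearity and using the structure of $\bm{B}$ together with the antisymmetry $b(u,w,v)=-b(u,v,w)$ and $b(u,v,v)=0$ recorded before \eqref{est-tri-b-1}, both the fully-$\bm{Y}$ term $\langle\bm{B}(\bm{Y},\bm{Y}),\bm{Y}\rangle$ and the mixed term $\langle\bm{B}(\bm{Z},\bm{Y}),\bm{Y}\rangle$ vanish identically (the usual MHD cancellation), leaving only $\langle\bm{B}(\bm{Y},\bm{Z}),\bm{Y}\rangle$ and $\langle\bm{B}(\bm{Z},\bm{Z}),\bm{Y}\rangle$. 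Shifting the derivative off the $\bm{Z}$-factor by antisymmetry and applying Hölder together with the two-dimensional Ladyzhenskaya inequality $\|f\|_{\mathbb{L}^4}^2\le C\|f\|_{\mathbb{L}^2}\|\nabla f\|_{\mathbb{L}^2}$, these terms are dominated by $\varepsilon\|\bmA^{1/2}\bm{Y}\|^2+C_\varepsilon\big(\|\bm{Z}\|_{\mathbb{L}^{4\otimes}_{\sigma}(D)}^4\|\bm{Y}\|_{\mathbb{L}^{2\otimes}_{\sigma}(D)}^2+\|\bm{Z}\|_{\mathbb{L}^{4\otimes}_{\sigma}(D)}^4\big)$. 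Absorbing the $\varepsilon$-term into the dissipation and invoking Gronwall's lemma with the integrable weight $t\mapsto\|\bm{Z}(t)\|_{\mathbb{L}^{4\otimes}_{\sigma}(D)}^4\in L^1(0,T)$, I conclude that $\mathbb{P}$-a.s.
\begin{align*}
\bm{Y}\in L^\infty(0,T;\mathbb{L}^{2\otimes}_{\sigma}(D))\cap L^2(0,T;\mathbb{H}^{1\otimes}(D)).
\end{align*}

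The final step converts this energy bound into the desired regularity. By the two-dimensional Gagliardo--Nirenberg inequality $\|\bm{Y}\|_{\mathbb{L}^{p\otimes}_{\sigma}(D)}\le C\|\bm{Y}\|_{\mathbb{L}^{2\otimes}_{\sigma}(D)}^{2/p}\|\nabla\bm{Y}\|_{\mathbb{L}^{2\otimes}_{\sigma}(D)}^{(p-2)/p}$ and the identity $\frac{(p-2)q}{p}=2$ (which is exactly the relation $\frac1q=\frac12-\frac1p$), one finds $\int_0^T\|\bm{Y}\|_{\mathbb{L}^{p\otimes}_{\sigma}(D)}^q\,\d t\le C\|\bm{Y}\|_{L^\infty(0,T;\mathbb{L}^{2\otimes}_{\sigma}(D))}^{2q/p}\int_0^T\|\nabla\bm{Y}\|_{\mathbb{L}^{2\otimes}_{\sigma}(D)}^2\,\d t<\infty$, so $\bm{Y}\in L^q(0,T;\mathbb{L}^{p\otimes}_{\sigma}(D))$ and hence $\bm{u}=\bm{Y}+\bm{Z}\in L^q(0,T;\mathbb{L}^{p\otimes}_{\sigma}(D))$. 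Because this a priori bound is uniform and the local existence time of Theorem \ref{main} can be taken bounded below in terms of $\|\bm{u}(t_0)\|_{\mathbb{L}^{2\otimes}_{\sigma}(D)}$ and the absolute continuity of the integral $\int_{t_0}^{\cdot}\|\bm{Z}(s)\|_{\mathbb{L}^{p\otimes}_{\sigma}(D)}^q\,\d s$, the local solution can be prolonged step by step across $[0,T]$; global uniqueness follows by patching the local uniqueness of Theorem \ref{main} on each subinterval.

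I expect the main obstacle to be the rigorous justification of the energy identity, since $\bm{Y}$ is a priori only an $L^q(\mathbb{L}^{p\otimes})$-valued solution of an integral equation, whereas the formal testing with $\bm{Y}$ requires $\bm{Y}(t)\in\mathscr{D}(\bmA^{1/2})$ and sufficient time-regularity. I would resolve this either by a Galerkin approximation of the pathwise equation for $\bm{Y}$ (deriving the estimates on the approximants and passing to the limit), or by using the smoothing of the analytic semigroup $\bm{S}(t)$ in the mild formulation to upgrade $\bm{Y}$ to a weak (variational) solution on $(0,T)$ to which the energy identity applies. The delicate point throughout is that the cross terms must be controlled using precisely the borderline integrability $\bm{Z}\in L^4(0,T;\mathbb{L}^{4\otimes}_{\sigma}(D))$, equivalently the critical relation $\frac1q=\frac12-\frac1p$, with no margin to spare.
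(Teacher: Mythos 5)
Your proposal follows essentially the same route as the paper's proof: the decomposition $\bm{u}=\bm{Y}+\bm{Z}$, the pathwise bound $\bm{Z}\in L^4(0,T;\mathbb{L}^{4\otimes}(D))$ obtained from the Burkholder estimate \eqref{est-con-Z} with $p=q=4$, the energy estimate for $\bm{Y}$ via the MHD trilinear cancellations, Ladyzhenskaya/Young and Gronwall, the conversion of $L^\infty(0,T;\mathbb{L}_\sigma^{2\otimes}(D))\cap L^2(0,T;\mathbb{H}^{1\otimes}(D))$ into an $L^q(0,T;\mathbb{L}_\sigma^{p\otimes}(D))$ bound, and stepwise prolongation of the local solution. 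The only cosmetic differences are that you use the Gagliardo--Nirenberg inequality where the paper uses an interpolation-space and Sobolev embedding argument, and that you are more explicit than the paper about the need to rigorously justify the formal energy identity.
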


\begin{proof}
The local existence follows from Theorem \ref{main}. Now let us establish an a priori bound that ensures global existence.
 We shall show that the $\mathbb{L}^{2\otimes}_{\sigma}(D)$-norm of $\bm{Y}$ is bounded on the interval $\left[0, T_0\right]$. Using the chain rule gives
\begin{align*}
\frac{1}{2} \frac{\mathrm{d}}{\mathrm{d} t}\|\bm{Y}(t)\|_{\mathbb{L}^{2\otimes}(D)}^2 
& =-\|\nabla \bm{Y}(t)\|_{\mathbb{L}^{2\otimes}(D)}^2 -b({Y}_1(t)+{Z}_1(t),{Z}_1(t),{Y}_1(t) )+b({Y}_2(t)+{Z}_2(t),{Z}_2(t),{Y}_1(t) )\\
&\quad-b({Y}_1(t)+{Z}_1(t),{Z}_2(t),{Y}_2(t) )+b({Y}_2(t)+{Z}_2(t),{Z}_1(t),{Y}_2(t) ),
\end{align*}
where $\bm{Y}=({Y}_1,{Y}_2), \bm{Z}=({Z}_1,{Z}_2)$. By using the property of $\bm{B}$, \eqref{est-tri-b-1} and Young's inequality, we obtain
\begin{align*}
\frac{1}{2} \frac{\mathrm{d}}{\mathrm{d} t}\|\bm{Y}(t)\|_{\mathbb{L}^{2\otimes}(D)}^2 
& \leq -\|\nabla \bm{Y}(t)\|_{\mathbb{L}^{2\otimes}(D)}^2+ 4\|\bm{Y}(t)+\bm{Z}(t)\|_{\mathbb{L}^{4\otimes}(D)}\|\bm{Z}(t)\|_{\mathbb{L}^{4\otimes}(D)}\|\nabla \bm{Y}\|_{\mathbb{L}^{2\otimes}(D)}\\
&\leq -\frac12 \|\nabla \bm{Y}(t)\|_{\mathbb{L}^{2\otimes}(D)}^2 +C_1\| \bm{Z}\|_{\mathbb{L}^{4\otimes}(D)}^4 \|\bm{ Y}\|_{\mathbb{L}^{2\otimes}(D)}^2+C_2\| \bm{Z}\|_{\mathbb{L}^{4\otimes}(D)}^4.
\end{align*}
Now applying the Gronwall's Lemma yields that
\begin{align}\label{uniform-est-pro}
\sup_{t\in[0,T]}\|\bm{Y}(t)\|_{\mathbb{L}^{2\otimes}_{\sigma}(D)}^2\leq C_2 \int_0^T e^{C_1 \int_s^T\|\bm{Z}(r)\|_{\mathbb{L}^{4\otimes}(D)}^4 \mathrm{dr}}\|\bm{Z}(s)\|_{\mathbb{L}^{4\otimes}(D)}^4 \mathrm{~d} s<\infty ,\quad \mathbb{P}\text{-a.s.}
\end{align}
where we used \eqref{est-con-Z} with $p=q=4$ to infer $\mathbb{E}\|\bm{Z}\|_{L^4(0,T; \mathbb{L}^{4\otimes}(D))}<\infty$. 
  Since $\bm{Y}_0\in \mathbb{L}_{\sigma}^{2\otimes}(D)$, by Theorem \ref{main} there exists a unique local solution $\bm{Y}$ which belongs to $L^{q}(0,\tau;\mathbb{L}_{\sigma}^{p\otimes }(D))$. 
   By the above priori estimate \eqref{uniform-est-pro}, we infer $\bf{Y}(\tau)\in \mathbb{L}_{\sigma}^{2\otimes}(D)$. We can repeating the argument above and consider equation \eqref{MHD-eq-main} with initial data $\bm{Y}(\tau)$:
    \begin{align}
    \begin{split}\label{MHD-new-tau}
 \mathrm{d} \bm{v} (t) =-\bmA \bm{v}(t)\mathrm{d} t+\bm{B}(\bm{v}(t)+\bm{Z}^{\tau}(t),\bm{v}(t)+\bm{Z}^{\tau}(t))\mathrm{d}t,\quad
 \bm{v}(0)=\bm{Y}(\tau),
 \end{split}
 \end{align}
where 
$
\bm{Z}^{\tau}(t)=\int_{0}^t \int_Z \xi(s, z) \tilde{N}^{\tau}(\mathrm{d} s, \mathrm{d} z), \text{ with } N^{\tau}(t,A):=N(t+\tau,A)-N(\tau,A),
$
for each $t\geq0$ and $A\in \mathcal{Z}$. It is easy to verify that $N^{\tau}(t,A)$ is a Poisson random measure with respect to  the shifted filtration $\mathbb{F}^{\tau}:=(\mathcal{F}_{t+\tau})_{t\geq0}$ with the same intensity measure $\nu$ and 
$\int_{0}^{t}\int_{Z}\bm{S}(t-s)\xi(\tau+s,z)\,\tilde{N}^{\tau}(\d s,\d z)=\int_{\tau}^{\tau+t}\int_{Z}\bm{S}(\tau+t-s)\xi(s,z)\,\tilde{N}(\d s,\d z).$ 
By Theorem \ref{main}, Equation \eqref{MHD-new-tau} has a unique solution $v\in  L^{q}(\tau,  \tau_1\;\mathbb{L}_{\sigma}^{p\otimes}(D)) $ with $\tau_1>\tau$. In this way we extend our solution to the time interval $\left[\tau, \tau_1\right]$. Repeating this procedure finitely many times leads to solutions on $[0,\tau_{\infty}]$, where $\tau_{\infty}$ is the supremum time over $[0,T]$ upon which the solution exists. 

 Note that by interpolation theorem and the Sobolev embedding we have
$L^{\infty}(0, T ; \mathbb{L}_\sigma^{2 \otimes}(D)) \cap L^2(0, T ; \mathbb{H}^{1,2\otimes}(D)) \subset L^{q}(0, T ; \mathbb{H}^{1-\frac{2}{p},2\otimes}(D)) \hookrightarrow L^{q}(0, T ; \mathbb{L}_\sigma^{p \otimes}(D))$, for $2\leq p<\infty$ with $\frac1q=\frac12-\frac1p$. Then by \eqref{uniform-est-pro}, we have the a priori estimate for $Y$ in  $L^{q}(0, T ; \mathbb{L}_\sigma^{p \otimes}(D))$.  By following a standard contradiction argument, the above uniform priori estimate leads to $\tau_{\infty}=T$. Therefore we construct a global mild solution existing on the full interval $[0, T]$.

\end{proof}

\end{document}